\documentclass[11pt]{amsart}

\usepackage{amsfonts,amssymb,amsthm,amsmath,amscd,enumerate,tikz,tikz-cd,float,graphicx,overpic,url,comment,hyperref}
\usepackage[capitalise,noabbrev]{cleveref}
\usetikzlibrary{calc,decorations.pathreplacing,arrows}

\graphicspath{./drawings/}

\usepackage[backend=bibtex,
maxbibnames=5,
style=alphabetic,
giveninits=true,
doi=false,
url=false,
isbn=false]{biblatex}
\renewbibmacro{in:}{%
  \ifentrytype{article}
    {}
    {\printtext{\bibstring{in}\intitlepunct}}%
}

\DeclareFieldFormat[article]{volume}{\mkbibbold{#1}}
\DeclareFieldFormat[article]{number}{\mkbibbold{#1}}

\hypersetup{
    colorlinks = false,
    citecolor=green, 
    linkcolor=red,  
    urlcolor=cyan   
}

\numberwithin{equation}{section}
\newtheorem{theorem}{Theorem}
\newtheorem{lemma}[theorem]{Lemma}
\newtheorem{corollary}[theorem]{Corollary}
\newtheorem{proposition}[theorem]{Proposition}
\newtheorem{definition}[theorem]{Definition}
\theoremstyle{definition}

\newtheorem{remark}[theorem]{Remark}
\newtheorem{example}[theorem]{Example}
\numberwithin{theorem}{section}

\newcommand{\Z}{\mathbf{Z}}

\newcommand{\R}{\mathbf{R}}
\newcommand{\C}{\mathbf{C}}

\newcommand{\sepdim}{\mathrm{sepdim}}
\newcommand{\cone}{\mathrm{Cone}_{\omega}}

\title{Coarse median property of virtually nilpotent groups}
\author{Hyeonggeun Kim}
\address{Department of Mathematics, University of Michigan, Ann Arbor, MI 48109}
\email{kimhg@umich.edu}

\begin{document}
\maketitle

\section{Introduction}

The concept of a coarse median space introduced by Bowditch \cite{Bowditch2013CoarseMS} simultaneously generalizes Gromov-hyperbolic spaces and CAT(0) cube complexes. Coarse median spaces capture many of the interesting features of nonpositively curved spaces and include as an example the mapping class groups of surfaces. More generally, all hierarchically hyperbolic spaces  admit a coarse median structure \cite{Behrstock2019}. The existence of such a structure can be understood as a coarse nonpositive curvature condition. 

In this note, we show that virtually nilpotent groups are coarse median if and only if they are virtually abelian. The main idea is that the sub-Riemannian geometry of the asymptotic cone obstructs the existence of a locally convex Lipschitz median of finite rank. As an application, we deduce that non-compact lattices in the isometry group of a rank 1 symmetric space of non-compact type other than real hyperbolic space are not coarse median. This establishes the remaining case in the classification of lattices with the coarse median property initiated by Haettel \cite{Haettel2016}. The same approach applies more generally to complete finite-volume non-compact Riemannian manifolds $M$ of pinched negative sectional curvature: if at least one cusp cross-section does not admit a flat metric, then $\pi_1(M)$ is not coarse median.


\section{Background}
\subsection{Median spaces}
\begin{definition}
    Let $X$ be a set and $\mu:X^3 \to X$ be a ternary operation. The map $\mu$ is a \textbf{median} and $(X,\mu)$ is a \textbf{median algebra} if
\begin{itemize}
    \item $\mu(a,a,b) = a$,\ $\mu(a,b,c) = \mu(b,a,c) = \mu(a,c,b)$;
    \item $\mu(a,b,\mu(c,d,e)) = \mu(\mu(a,b,c),\mu(a,b,d),e)$.
\end{itemize}
The second axiom is referred to as the \textbf{five-point condition} of medians.
\end{definition}
\begin{example}
    The set $\left\{0,1\right\}$ has a unique median. The $n$-cube $\left\{0,1\right\}^n$ also has a unique median obtained by taking the product of the previous median.
\end{example}
\begin{definition}
    For median algebras $(X,\mu)$ and $(X',\mu')$, a map $f:X\to X'$ is \textbf{median} if $\mu'(f(x),f(y),f(z)) = f(\mu(x,y,z))$ for all $x,y,z\in X$. Moreover if $f$ is injective, it is called a \textbf{median embedding}. The \textbf{rank} of $X$ is the largest $n\geq 1$ for which there is a median embedding $\left\{0,1\right\}^n\to X$.
\end{definition}
\begin{definition}
    The \textbf{interval} between $a,b\in X$ is defined by
    $$ I(a,b) = \left\{c\in X: \mu(a,b,c) = c\right\}.$$
    A subset $C\subset X$ is called \textbf{convex} with respect to $\mu$ if $I(a,b)\subset C$ for all $a,b\in C$. If $X$ is additionally equipped with a metric, a subset $C\subset X$ is called $k$-\textbf{quasiconvex} with respect to $\mu$ if $I(a,b)\subset N_k(C)$ for all $a,b\in C$.
\end{definition}

To distinguish with the usual metric notions of convexity, we will always specify when subspaces are convex with respect to a median.

\begin{definition}
    Let $(X,d)$ be a metric space. The median $\mu$ on $X$ is called
    \begin{itemize}
        \item \textbf{continuous} if it is continuous as a map $X^3 \to X$;
        \item \textbf{Lipschitz} if is Lipschitz with respect to all three variables;
        \item \textbf{locally convex} if each point $X$ has a neighborhood basis of convex subsets.
    \end{itemize}    
\end{definition}
We will implicitly assume that all medians on metric spaces are continuous. The tuple $(X,d,\mu)$ is called a \textbf{metric median algebra}.

\begin{example}
     $(\R, L^1)$ is equipped with the `statistics median' which is Lipschitz and locally convex. Similarly, $(\R^n, L^1)$ admits a Lipschitz locally convex median by taking the component-wise statistics median.
\end{example}

\begin{definition}
    Let $X$ be a Hausdorff topological space. The \textbf{separation dimension} of $X$, which we denote as $\sepdim(X)$, is defined inductively as follows:
    \begin{itemize}
        \item $\sepdim(\emptyset) = -1$;
        \item $\sepdim(X)\leq n$ if for any distinct points $x,y\in X$, there are closed subsets $A,B\subset X$ such that $x\not\in A$, $y\not\in B$, $X = A\cup B$, and $\sepdim(A\cap B)\leq n-1$.
    \end{itemize}
\end{definition}
Note that the notion of separation dimension is purely topological. Some useful properties of separation dimension are the following:
\begin{lemma}
    Let $Y$ be a Hausdorff topological space and $X\subset Y$ be given the subspace topology. Then $\sepdim(X)\leq \sepdim(Y)$.\label{sepdim-order}
\end{lemma}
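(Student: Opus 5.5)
We argue by induction on $n$ and prove the statement: for every Hausdorff space $Y$ with $\sepdim(Y)\leq n$ and every subspace $X\subset Y$, we have $\sepdim(X)\leq n$. (If $\sepdim(Y)$ is not bounded by any integer, there is nothing to prove.) The base case $n=-1$ is immediate. Then $Y=\emptyset$, so $X=\emptyset$ and $\sepdim(X)=-1$. Note also that $X$ is Hausdorff in the subspace topology, so $\sepdim(X)$ makes sense.

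For the inductive step, suppose $\sepdim(Y)\leq n$ and let $x,y\in X$ be distinct. Regarding $x,y$ as points of $Y$, the definition gives closed subsets $A,B\subset Y$ with $x\notin A$, $y\notin B$, $Y=A\cup B$, and $\sepdim(A\cap B)\leq n-1$. We restrict these sets to $X$ by setting $A'=A\cap X$ and $B'=B\cap X$. These are closed in the subspace topology of $X$, and $A'\cup B'=(A\cup B)\cap X=X$. We still have $x\notin A'$ and $y\notin B'$.

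Next, $A'\cap B'=(A\cap B)\cap X$ is a subspace of $A\cap B$. The topology it inherits from $X$ agrees with the one it inherits from $A\cap B$, since both are induced from $Y$. Now $A\cap B$ is a Hausdorff space with $\sepdim(A\cap B)\leq n-1$, so the inductive hypothesis, applied with $A\cap B$ in place of $Y$, gives $\sepdim(A'\cap B')\leq n-1$. Hence $A',B'$ witness the defining condition for $x,y$ in $X$, and $\sepdim(X)\leq n$.

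The only step needing care is the ordering of quantifiers in the induction. The inductive hypothesis must hold for all ambient spaces of dimension at most $n-1$, not just for $Y$, because we apply it to the new ambient space $A\cap B$. Once the statement is phrased this way, every step is routine, and I do not expect any real obstacle.
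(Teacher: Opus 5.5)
Your proof is correct and follows the paper's argument. Both induct on the dimension of the ambient space, intersect the separating closed sets $A,B\subset Y$ with $X$, and apply the inductive hypothesis to $A\cap B\cap X$ as a subspace of $A\cap B$. Your explicit quantification over all ambient spaces, and your check that the two subspace topologies agree, are details the paper leaves implicit.
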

\begin{proof}
    We induct on $\sepdim(Y)$. Take distinct points $x,y\in X$. Take closed subsets $A,B\subset Y$ that separate $x,y$ and have $\sepdim(A\cap B)< \sepdim(Y)$. By the induction hypothesis, we have $\sepdim(A\cap B\cap X)\leq \sepdim(A\cap B)$. Since $A\cap X$ and $B\cap X$ are closed subsets of $X$ that separate $x$ and $y$, we deduce that $\sepdim(X)\leq \sepdim(Y)$.
\end{proof}
For nice enough spaces, separation dimension coincides with the more classical notion of (large) inductive dimension.
\begin{definition}
    Let $X$ be a normal topological space. The \textbf{inductive dimension} of $X$, which we denote $\mathrm{Ind}(X)$, is defined inductively as follows:
    \begin{itemize}
        \item $\mathrm{Ind}(\emptyset) = -1$;
        \item $\mathrm{Ind}(X) \leq n$ if for any closed $A\subset X$ and open $V\subset X$ containing $A$, there is an open subset $U\subset X$ such that $A\subset \bar{U}\subset V$ and $\mathrm{Ind}(\bar{U} \setminus U) \leq n-1$.
    \end{itemize}
\end{definition}
\begin{lemma}[{\cite[Section III.6]{HurewiczWallman}}]
    If $X$ is a locally compact Hausdorff metric space, then $\sepdim(X) = \mathrm{Ind}(X)$.\label{ind-equal-sep}
\end{lemma}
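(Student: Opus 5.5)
We prove the two inequalities $\sepdim(X)\leq \mathrm{Ind}(X)$ and $\mathrm{Ind}(X)\leq\sepdim(X)$ separately, each by induction on the dimension. First note that a metric space is normal, so $\mathrm{Ind}(X)$ makes sense, and every subspace of $X$ is again metric, hence normal. We will also use the standard fact (Hurewicz--Wallman) that for metric spaces $\mathrm{Ind}$ is monotone on arbitrary subspaces. This is the metric-space analogue of \cref{sepdim-order}, and it follows from the coincidence $\mathrm{Ind}=\dim$ for metric spaces.

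\textbf{The inequality $\sepdim(X)\leq\mathrm{Ind}(X)$.} Induct on $n=\mathrm{Ind}(X)$; the case $n=-1$ is trivial. Let $x\neq y$. Apply the definition of $\mathrm{Ind}$ with $A=\{x\}$ and $V=X\setminus\{y\}$. This gives an open $U$ with $x\in \bar U\subset X\setminus\{y\}$ and $\mathrm{Ind}(\bar U\setminus U)\leq n-1$. Set
$$A' = X\setminus U,\qquad B'=\bar U .$$
Both sets are closed and $A'\cup B'=X$. We have $y\notin B'$, and $x\notin A'$ provided $x\in U$. To guarantee $x\in U$, first shrink using normality: choose an open $W$ with $x\in W\subset\bar W\subset X\setminus\{y\}$, and apply the definition with $A=\bar W$ and $V=X\setminus\{y\}$. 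Then $x\in\bar W\subset U$, as needed. Finally $A'\cap B'=\bar U\setminus U$ has $\mathrm{Ind}\leq n-1$, so by induction $\sepdim\leq n-1$. Hence $\sepdim(X)\leq n$.

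\textbf{The inequality $\mathrm{Ind}(X)\leq\sepdim(X)$.} This is the main obstacle, and local compactness is essential here. Induct on $n=\sepdim(X)$. Since $\sepdim$ is monotone by \cref{sepdim-order}, the inductive hypothesis applies to all subspaces. The plan has three steps.

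\begin{enumerate}[(i)]
\item \emph{Separating compact sets.} Use compactness to upgrade point separation to separation of a point $x$ from a compact set $K\not\ni x$. For each $y\in K$, take closed sets $A_y,B_y$ as in the definition of $\sepdim$. The complements $X\setminus B_y$ are open neighborhoods of the points $y$, so finitely many of them cover $K$. Intersect or union the corresponding separating pieces to obtain a single closed partition between $x$ and $K$. Its boundary lies in a finite union of sets $A_{y_i}\cap B_{y_i}$, each of $\sepdim\leq n-1$.

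\item \emph{Separating closed sets locally, then gluing.} Repeat the argument with $x$ replaced by a compact set to separate disjoint compact sets. Then use local compactness and $\sigma$-compactness on components (metric and locally compact) to exhaust $X$ by compact sets and treat closed $A\subset V$ locally.

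\item \emph{Controlling the dimension of unions.} The boundaries produced in (i) and (ii) are finite or countable unions of closed sets of dimension at most $n-1$. By induction these pieces have $\mathrm{Ind}\leq n-1$. The countable sum theorem for $\mathrm{Ind}$ in metric spaces then shows the total boundary has $\mathrm{Ind}\leq n-1$.
\end{enumerate}

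Step (iii) is where the real work lies. A cleaner alternative is to cite Hurewicz--Wallman directly: for separable metric spaces, $\mathrm{Ind}$ equals the small inductive dimension $\mathrm{ind}$. In the locally compact case, separating a point from a closed set reduces, after the compactness argument of step (i), to the point-separation condition defining $\sepdim$. Granting this, $\mathrm{ind}(X)\leq\sepdim(X)$ follows by induction, and so $\mathrm{Ind}(X)\leq\sepdim(X)$.
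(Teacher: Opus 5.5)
The paper gives no proof here beyond citing Hurewicz--Wallman. Your architecture is the classical one:
\begin{itemize}
\item $\mathrm{Ind}\le n\Rightarrow\sepdim\le n$ by direct induction;
\item conversely, compactness upgrades point--point partitions to point--compact ones, and local compactness upgrades these to point--closed ones, giving $\mathrm{ind}(X)\le n$;
\item finally one invokes $\mathrm{ind}=\mathrm{Ind}$.
\end{itemize}

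The genuine gap is this last step. Hurewicz--Wallman prove $\mathrm{ind}=\mathrm{Ind}$ only for \emph{separable} metric spaces, but the lemma does not assume separability. For non-separable metric spaces the two dimensions can differ (Roy's example). This is moot for the paper's application, where the cone is a manifold, but it matters for the lemma as stated.

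Your appeal to ``$\sigma$-compactness on components'' does not close the gap. Components of a locally compact metric space need not be open (e.g.\ in the Cantor set times $[0,1]$), so they do not decompose $X$. What is needed is paracompactness: a locally compact metric space is a topological sum of clopen $\sigma$-compact, hence separable, subspaces $X_\alpha$. One has $\sepdim(X_\alpha)\le\sepdim(X)$ by \cref{sepdim-order} and $\mathrm{Ind}(X)=\sup_\alpha\mathrm{Ind}(X_\alpha)$, so one may assume $X$ separable.

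The passage from a closed set $F\not\ni x$ to a compact set should also be made explicit. Take an open $O\ni x$ with $\bar O$ compact and disjoint from $F$. Separate $x$ from the compact set $\bar O\setminus O$ by a partition $L$ as in (i), and check that $L\cap\bar O$ separates $x$ from $F$.

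Your ``cleaner alternative'' does not actually avoid step (iii). Bounding the dimension of $L\cap\bar O\subset\bigcup_i (A_{y_i}\cap B_{y_i})$ still needs three things: the inductive hypothesis on each closed piece, the finite closed sum theorem, and closed monotonicity.

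There are two smaller errors.
\begin{itemize}
\item The inductive hypothesis does \emph{not} apply to all subspaces, as you assert. It is the lemma itself in lower dimension, so it needs local compactness. For instance, Erd\H{o}s space embeds in the Hilbert cube and is totally separated, so it has separation dimension $0$, yet it has dimension $1$. The error is harmless only because every partition you use is closed in $X$.
\item In the first direction the shrinking step is a non sequitur. The definition as printed yields only $\bar W\subset\bar U$, not $\bar W\subset U$, so $x\in U$ does not follow. The printed definition is evidently a typo for the standard $A\subset U\subset\bar U\subset V$, under which $x\in U$ is immediate and no shrinking is needed. If you use it literally, replace $U$ by $\mathrm{int}\,\bar U$. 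This set contains $W\ni x$, and its frontier $\bar U\setminus\mathrm{int}\,\bar U$ is a closed subset of $\bar U\setminus U$.
\end{itemize}
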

By the lemma above, we can deduce that the separation dimension of $\R^n$ is equal to $n$. The separation dimension precisely detects the rank of locally convex Lipschitz medians on the space.
\begin{theorem}[{\cite[Theorem 2.2]{Bowditch2013CoarseMS}}]
    Any topological space with a locally convex median of rank $r$ has separation dimension at most $r$.\label{bowditch-upper-bound}
\end{theorem}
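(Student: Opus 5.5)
I would prove the statement by induction on the rank $r$, constructing at each stage the closed separating sets $A,B$ from the median itself: half-spaces (walls) of the median algebra, cut down using local convexity. Throughout, $X$ is a (Hausdorff) topological space with a continuous median $\mu$ that is locally convex and has rank $r$.

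\textbf{Step 1: choosing $A$ and $B$.} Given distinct $x,y\in X$, I would separate them by convex sets. Local convexity gives a convex closed neighbourhood $C$ of $x$ missing $y$; closures of convex sets stay convex because $\mu$ is continuous. The standard median-algebra separation theorem (a Zorn's lemma argument) then produces a convex set $H\supseteq C$ with $y\notin H$ and convex complement. Such an $H$ is a half-space, and I would set $A=\overline{X\setminus H}$ and $B=\overline{H}$. The difficulty is making $A$ and $B$ closed while keeping $A\cap B$ small. I would try to show that $A\cap B$ lies in a ``wall'' $W$. For this I would take nested convex neighbourhoods $C\subset C'$ and let $W$ be the set of points $p$ with $\mu(x,y,p)$ lying on the boundary between the two gate projections. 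Alternatively, I would follow Bowditch and use the gate map $p\mapsto$ nearest point in a convex set, that is, projection via $\mu(\cdot,a,b)$ on intervals.

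\textbf{Step 2: the wall has lower rank.} The key claim is that $W=A\cap B$, a closed convex subset with the restricted median, has rank at most $r-1$. Suppose $W$ contained a median-embedded cube $\{0,1\}^{r}$. Since $W$ is the frontier of a half-space, $x$ and $y$ lie on opposite sides of it. Using the five-point condition, the map $(v,\epsilon)\mapsto \mu(v,\,x\text{ or }y,\,\cdot)$, suitably gated, should produce a median embedding of $\{0,1\}^{r+1}$, contradicting rank $r$. I would define the extra coordinate by projecting the cube toward $x$ and toward $y$. Injectivity would come from the wall separating $x$ and $y$, and the median property would come from gate maps being median homomorphisms. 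The base case is rank $0$ (or a single point), where $\sepdim\le 0$ by the definition applied with $A\cap B=\emptyset$. Since $W$ is a subspace with a locally convex median (convex subsets of $W$ are intersections), induction gives $\sepdim(W)\le r-1$. Then $\sepdim(X)\le r$ follows directly from the definition, and \cref{sepdim-order} lets me pass to subsets wherever needed.

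\textbf{Main obstacle.} I expect the hardest step to be ensuring that $A\cap B$ is genuinely a lower-rank convex set. Topological closure of a half-space can be much larger than the algebraic wall, and local convexity is what must control this. My first attempt would replace the single half-space by a pair of nested half-spaces $H\subset H'$ with $\overline{H}\subset \operatorname{int} H'$, and take $A=X\setminus \operatorname{int}H$ and $B=\overline{H'}$ style sets. I would then show that $A\cap B$ is contained in the convex ``strip'' $H'\setminus H$. The rank drop would be proved via the cube-extension argument applied to a cube in the strip together with points of $H$ and of $X\setminus H'$.
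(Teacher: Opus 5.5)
Your outer scheme is the right one (the paper itself only quotes Bowditch here). Separate by a half-space $H$, take $A=\overline{X\setminus H}$ and $B=\overline{H}$, note that $A\cap B$ is convex as an intersection of closures of convex sets, show it has rank at most $r-1$, and induct. But the rank drop, which carries the whole content, is not established, and both mechanisms you offer for it fail.

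\textbf{The proposed mechanisms fail.} The only input your cube-extension argument uses is that $x$ and $y$ lie on opposite sides of $W$. Anything you build from $Q\cup\{x,y\}$ by medians (gating onto intervals via $\mu(a,b,\cdot)$ included) stays in the median subalgebra these points generate, and that subalgebra can have rank $r$.
\begin{itemize}
\item Take $\R^2$ with the $\ell^1$ median, $H=\{x_1\le 0\}$, $W=\{x_1=0\}$, $Q=\{(0,0),(0,1)\}$, $x=(-1,5)$, $y=(1,-3)$. The four points $y,q_0,q_1,x$ are monotone in both coordinates, so they form a median-closed chain of rank $1$. No $\{0,1\}^2$ can come out of them.
\item The nested fallback is worse. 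There $A\cap B\supseteq H'\setminus H$, and strips have full rank: with $H=\{x_1\le0\}$ and $H'=\{x_1\le1\}$ you get $A\cap B=[0,1]\times\R$, of separation dimension $2$.
\item Step 1 as written also does not give $y\notin\overline{H}$. In $\R$ with $C=[-1,1]$ and $y=2$, the half-space $H=(-\infty,2)$ is allowed. Separate disjoint convex neighbourhoods $U\ni x$, $V\ni y$ instead, so that $x\in\mathrm{int}\,H$ and $y\in\mathrm{int}(X\setminus H)$.
\end{itemize}

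\textbf{The missing idea.} The new direction comes from points \emph{near} the cube on \emph{both} sides of $H$. This is exactly what $W=\overline{H}\cap\overline{X\setminus H}$ supplies, with continuity and local convexity used to keep the cube's $r$ walls under perturbation.
\begin{enumerate}
\item Let $\{q_v\}_{v\in\{0,1\}^r}\subset W$ be median-embedded, and set $\phi_i=\mu(q_0,q_{e_i},\cdot)$. This is a continuous median homomorphism with $\phi_i(q_v)=q_{v_ie_i}$.
\item Take disjoint convex neighbourhoods $P^i_0\ni q_0$ and $P^i_1\ni q_{e_i}$. Then $\phi_i^{-1}(P^i_0)$ and $\phi_i^{-1}(P^i_1)$ are disjoint convex sets, separated by some wall $k_i$.
\item Since $q_v\in W$, choose $a_v\in H$ and $b_v\in X\setminus H$ in the neighbourhood $\bigcap_i\phi_i^{-1}(P^i_{v_i})$ of $q_v$.
\item These points witness that $k_1,\dots,k_r,H$ are pairwise transverse. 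Pairwise transverse walls, $r+1$ of them, give a median-embedded $\{0,1\}^{r+1}$: use Helly, or pass to the finite subalgebra generated by the $a_v,b_v$, which is the vertex set of a CAT(0) cube complex. This contradicts rank $r$.
\end{enumerate}
With this lemma in place, your induction goes through.
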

\begin{theorem}[{\cite[Proposition G]{Haettel2016}}]
Let $(X,d)$ be a connected metric space equipped with a locally convex Lipschitz median $\mu$ of rank $r$. Then there exists a bi-Lipschitz median embedding of the $r$-cube $[0,1]^r$ into $X$ with convex image with respect to $\mu$.\label{haettel-cube-embedding}
\end{theorem}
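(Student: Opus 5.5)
We prove Haettel's statement: given a connected metric space $(X,d)$ with a locally convex Lipschitz median $\mu$ of rank $r$, we want a bi-Lipschitz median embedding of $[0,1]^r$ with $\mu$-convex image. The plan has three stages: first produce an embedded discrete cube, then thicken it to a continuous cube by interpolating along intervals, and finally shrink to a small convex neighborhood where the Lipschitz and local convexity hypotheses give metric control.

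\textbf{Step 1 (a discrete cube inside a small convex set).} By the definition of rank there is a median embedding $\{0,1\}^r\to X$. Since $X$ is connected and $\mu$ is continuous and locally convex, we can move the vertices of this cube into a single small convex neighborhood $U$ of a point. One way to do this is to push the cube toward one vertex using the maps $x\mapsto \mu(x,p,q)$. These are median maps: the five-point condition shows that $x\mapsto\mu(a,b,x)$ is a median retraction onto $I(a,b)$. We then need to check that the pushed cube stays injective, which we intend to get by adjusting $p,q$ continuously. The aim is to have all $2^r$ vertices in a convex set $U$ of small diameter, on which the Lipschitz constant of $\mu$ controls distances.

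\textbf{Step 2 (filling in the cube).} For each coordinate direction $i$, take the edge vertices $a_i=e_0$ and $b_i=e_i$. The interval $I(a_i,b_i)$ is convex and connected, because it is the image of $X$ under the continuous retraction $\mu(a_i,b_i,\cdot)$. Its rank is $1$ once the cube is chosen maximal, since $r$ is the rank. A connected rank-one median algebra with compact intervals is a median arc, i.e.\ it is homeomorphic to $[0,1]$ with the statistics median. So we get median paths $\gamma_i:[0,1]\to I(a_i,b_i)$. Define
$$\Phi(t_1,\dots,t_r)=\text{the iterated median combination of the }\gamma_i(t_i),$$
using the standard fact that the join of pairwise ``transverse'' intervals in a median algebra is median isomorphic to their product. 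Concretely, we take $\Phi$ to be the median map from the product of arcs onto the median hull of the cube vertices; the hull is convex by construction. Injectivity comes from the fact that the hyperplanes separating the vertices in different directions pairwise cross.

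\textbf{Step 3 (bi-Lipschitz control), the main obstacle.} The upper Lipschitz bound comes from $\mu$ being Lipschitz, provided the arcs $\gamma_i$ are parametrized by arclength, or by $d(a_i,\gamma_i(t))$, which is monotone along a median arc up to constants. The lower bound is the hard part. I would argue by gate projections: the map $x\mapsto\mu(a_i,b_i,x)$ is $L$-Lipschitz and recovers the $i$-th coordinate, so $d(\Phi(s),\Phi(t))\ge L^{-1}\max_i d(\gamma_i(s_i),\gamma_i(t_i))$. A lower bound on each arc is then needed, which requires that $d$ restricted to an arc be comparable to the parameter. This should follow from reparametrizing by distance, after checking that this reparametrization is strictly monotone on a median arc by convexity. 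If this fails, I would pass to a smaller subarc, using local convexity to obtain a compact convex neighborhood, and then rescale.
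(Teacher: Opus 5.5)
\textbf{The gap is in Step 3, and it cannot be closed using only the hypotheses you invoke.}

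Your reduction there is sound. Write $v_0,v_{e_i},v_{\mathbf 1}$ for the base vertex, its neighbours and the opposite vertex, and take $\Phi(t)=\mu(\cdots\mu(\gamma_1(t_1),\gamma_2(t_2),v_{\mathbf 1})\cdots,\gamma_r(t_r),v_{\mathbf 1})$. The gates $\pi_i=\mu(v_0,v_{e_i},\cdot)$ satisfy $\pi_i\Phi(t)=\gamma_i(t_i)$. This gives
\[
L^{-1}\max_i d(\gamma_i(s_i),\gamma_i(t_i))\le d(\Phi(s),\Phi(t))\le C\sum_i d(\gamma_i(s_i),\gamma_i(t_i)),
\]
so everything reduces to $r=1$: each edge arc $I(v_0,v_{e_i})$, with the restricted metric, must be bi-Lipschitz to a segment.

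Convexity plus the Lipschitz property give only bounded turning on an arc: $d(x,y)\le L\,d(x,z)$ whenever $y\in I(x,z)$. They give neither rectifiability, which your arclength upper bound needs, nor a chord--arc inequality, which the lower bound needs. Concretely, take $X=[0,1]$ with $d(x,y)=|x-y|^{1/2}$ and the statistics median. It is connected, and the median is $1$-Lipschitz, locally convex and of rank $1$. Yet no nondegenerate arc in $X$ is rectifiable, so there is no bi-Lipschitz map $[0,1]\to X$ at all. Reparametrizing by $d(a_i,\cdot)$ does not help: here it is $t\mapsto t^{1/2}$, strictly monotone, and the result is still not bi-Lipschitz. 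Passing to subarcs does not help either, since they are again snowflakes. So, read literally for arbitrary connected metric spaces, the statement is false, and any proof must use extra structure.

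\textbf{The missing ingredient is a length structure on $X$.} The paper only needs the geodesic case, since asymptotic cones of finitely generated groups are geodesic. Suppose $X$ is geodesic and $x,y$ lie on an edge arc $A$. Push a geodesic from $x$ to $y$ through the $L$-Lipschitz gate $\mu(x,y,\cdot)$. The resulting path stays in the subarc $I(x,y)\subset A$ (by convexity of $A$) and joins its endpoints, so it covers it. Hence $\mathrm{length}(I(x,y))\le L\,d(x,y)$. The arclength parametrization of $A$ is then $L$-bi-Lipschitz, and your estimates make $\Phi$ bi-Lipschitz onto the convex set $I(v_0,v_{\mathbf 1})$.

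\textbf{Smaller points.}
\begin{itemize}
\item Step 1 is unnecessary, because the Lipschitz constant is global. As described, pushing by $\mu(\cdot,p,q)$ also need not preserve injectivity.
\item The image should be the interval $I(v_0,v_{\mathbf 1})$, not the ``median hull'' of the vertices; that hull is just the $2^r$ vertices.
\item The rank-one claim for edge intervals is true but needs an argument. Every wall separating $v_0$ from $v_{e_i}$ separates the two $i$-th facets of the cube. So two crossing such walls, together with one wall in each other direction, would give $r+1$ pairwise crossing walls, hence an $(r+1)$-cube.
\end{itemize}
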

\begin{corollary}
    Let $X$ be a connected metric space with a Lipschitz locally convex median of rank $r$. Then $\sepdim(X) = r$.\label{sepdim-equals-rank}
\end{corollary}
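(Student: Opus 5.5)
The plan is to prove the two inequalities $\sepdim(X)\le r$ and $\sepdim(X)\ge r$ separately, each from one of the results quoted just above.

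For the upper bound, a metric median algebra is in particular a topological space carrying a locally convex median of rank $r$. By \cref{bowditch-upper-bound} this gives $\sepdim(X)\le r$ at once. The only point to check is that the notion of rank used by Bowditch agrees with the one defined here, namely the largest $n$ admitting a median embedding of $\{0,1\}^n$. Since both are stated for median algebras in the same sense, this is immediate.

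For the lower bound, we use the connectedness and Lipschitz hypotheses. \cref{haettel-cube-embedding} gives a bi-Lipschitz median embedding $f:[0,1]^r\to X$. A bi-Lipschitz map is a homeomorphism onto its image, so $f([0,1]^r)$, with the subspace topology from $X$, is homeomorphic to $[0,1]^r$. Then \cref{sepdim-order} gives
$$\sepdim(X)\ \ge\ \sepdim(f([0,1]^r))\ =\ \sepdim([0,1]^r).$$
It remains to show $\sepdim([0,1]^r)=r$. The cube $[0,1]^r$ is a compact, hence locally compact, Hausdorff metric space, so \cref{ind-equal-sep} gives $\sepdim([0,1]^r)=\mathrm{Ind}([0,1]^r)$. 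The classical fact $\mathrm{Ind}([0,1]^r)=r$ is in Hurewicz--Wallman, where the dimension of the cube is computed and coincides for all the standard dimension functions on separable metric spaces.

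Alternatively, one can argue through $\R^r$: the open cube $(0,1)^r$ is homeomorphic to $\R^r$, whose separation dimension is noted above to be $r$. By \cref{sepdim-order} again, $\sepdim([0,1]^r)\ge\sepdim((0,1)^r)=r$. Only the lower bound is needed here, so this suffices.

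The main obstacle is this last step, identifying the separation dimension of the cube as $r$, which rests on classical dimension theory rather than anything proved in the paper. Everything else is a direct concatenation of the quoted results. Combining the two inequalities gives $\sepdim(X)=r$.
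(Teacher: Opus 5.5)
Your proposal is correct and follows essentially the same route as the paper: the upper bound comes from \cref{bowditch-upper-bound}, and the lower bound from the bi-Lipschitz cube embedding of \cref{haettel-cube-embedding} together with \cref{sepdim-order}. You also spell out why $\sepdim([0,1]^r)=r$, either via \cref{ind-equal-sep} or via the open subcube $(0,1)^r\cong\R^r$, which the paper leaves implicit.
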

\begin{proof} 
    Take the median embedding $[0,1]^r\to X$. Since this map is a homeomorphism onto its image, we have that $\sepdim(X) \geq r$ by \cref{sepdim-order}. But we also have $\sepdim(X)\leq r$ by the theorem above.
\end{proof}
\begin{example}
    The statistics median on $(\R^n, L^1)$ has rank $n$.
\end{example}
\begin{remark}
    Note that if a metric space $X$ admits a median, then so does any metric space $Y$ bi-Lipschitz to $X$. The bi-Lipschitz and local convexity properties of medians are also preserved under bi-Lipschitz equivalence of the base space.
\end{remark}

\subsection{Coarse medians and relative hyperbolicity}
\begin{definition}
    A ternary map $\mu:X^3\to X$ is called a \textbf{coarse median} on $X$ if there is a constant $k>0$ and a function $h:\Z_{\geq 0} \to [0,\infty)$ such that:
    \begin{itemize}
        \item $d(\mu(x,y,z),\mu(x',y',z')) \leq k(d(x,x') + d(y,y') + d(z,z')) + h(0)$;
        \item for any $n\geq 1$ and finite subset $A\subset X$ of size at most $n$, there is a finite median algebra ($\Pi,\mu_{\Pi})$ and maps $\pi:A\to \Pi$, $\lambda:\Pi\to X$ such that
        $$ d(\lambda \mu_{\Pi}(p,q,r),\mu(\lambda p,\lambda q,\lambda r))\leq h(n)$$
        for all $p,q,r\in \Pi$ and $d(a,\lambda \pi a)\leq h(n)$ for all $a\in A$.
    \end{itemize}
    The space $(X,d,\mu)$ is called a \textbf{coarse median space}. If $\Pi$ can always be taken to have rank at most $r$, we say that $\mu$ has \textbf{rank at most $r$}.
\end{definition}
If $X$ admits a coarse median of finite rank, we will often simply say that $X$ is coarse median. We say that a finitely generated group $G$ is coarse median if any, and hence all, of its Cayley graphs admit a coarse median of finite rank. (Quasi)convexity of subspaces with respect to coarse medians are similarly defined. We will only consider coarse medians of finite rank in this note.

Coarse median spaces can be interpreted as coarse analogues of metric median algebras, or more accurately, spaces in which finite subsets are uniformly well-approximated by finite median algebras. One motivation for considering such spaces is that their asymptotic cones are equipped with metric medians with nice properties.
\begin{definition}
    Take a basepoint $e\in X$, a nonprincipal ultrafilter $\omega$ on $\Z_{>0}$, and a sequence of positive real numbers $(\lambda_i)_{i=1}^{\infty}$ such that $\lim_{\omega}\lambda_i = \infty$. Consider the following space equipped with the pseudometric $d_{\infty}$:
    $$ \left\{(x_i)_{i=1}^{\infty}: d(x_i,e)/\lambda_i \text{ is } \omega\text{-bounded}\right\},\quad d_{\infty}((x_i),(y_i)) = \lim_\omega d(x_i,y_i)/\lambda_i.$$
    The \textbf{asymptotic cone} of $X$, denoted $\cone(X)$, is the metric space obtained by identifying pairs of points of zero distance in the space above.
\end{definition}
Note that the resulting space does not depend on the choice of basepoint $e\in X$, but does depend on the scaling factors $(\lambda_i)$ and the ultrafilter $\omega$. 
\begin{theorem}[{\cite[Theorem 2.3]{Bowditch2013CoarseMS}}]
    Let $(X,d,\mu)$ be coarse median of rank at most $r$. Then any asymptotic cone of $X$ admits a locally convex Lipschitz median of rank at most $r$.\label{cone-admits-lipschitz-median}
\end{theorem}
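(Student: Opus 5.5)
The plan is to construct the median on $\cone(X)$ directly as the ultralimit of the coarse median. For points $x=(x_i)$, $y=(y_i)$, $z=(z_i)$ I will set $\mu_\infty(x,y,z)=(\mu(x_i,y_i,z_i))$. The first coarse median axiom gives
\[ d(\mu(x_i,y_i,z_i),\mu(x_i',y_i',z_i'))\le k\bigl(d(x_i,x_i')+d(y_i,y_i')+d(z_i,z_i')\bigr)+h(0). \]
Dividing by $\lambda_i$ and taking $\omega$-limits kills the additive constant $h(0)/\lambda_i$. So $\mu_\infty$ is well defined on equivalence classes, sends $\omega$-bounded sequences to $\omega$-bounded sequences (compare with $\mu(e,e,e)$), and is $k$-Lipschitz in each variable, hence continuous.

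Next I verify the median axioms. Any single identity, such as symmetry, $\mu(a,a,b)=a$, or the five-point condition, involves at most $n=5$ points. For each $i$, take $A_i=\{a_i,b_i,c_i,d_i,e_i\}$ together with its approximating finite median algebra $(\Pi_i,\pi_i,\lambda_i)$ from the second axiom. Any expression built from at most two nested medians of points of $A_i$ is within a uniform constant $C(k,h(5))$ of $\lambda_i$ applied to the corresponding expression in $\Pi_i$, by repeatedly using $d(a,\lambda\pi a)\le h(5)$, the approximate homomorphism property, and the Lipschitz bound. Both sides of the five-point identity are equal in $\Pi_i$, so they are within $2C$ of each other in $X$. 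After rescaling they agree in the cone, and the same argument gives the symmetry and idempotence axioms.

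For local convexity, I first show intervals are small. If $c\in I(a,b)$ in the cone, then $c=\mu_\infty(a,b,c)$. Since $d(\mu(a_i,b_i,c_i),\mu(a_i,a_i,c_i))\le k\,d(a_i,b_i)+h(0)$ and $\mu(a_i,a_i,c_i)$ is uniformly close to $a_i$, this gives $I(a,b)\subset B(a,k\,d(a,b))$. Hence the join of a ball $B(x,\epsilon)$ lies in $B(x,(2k+1)\epsilon)$. I will then invoke the standard fact that in a median algebra of rank at most $r$, the convex hull of a set is obtained after a number of iterated joins $J(S)=\bigcup_{a,b\in S}I(a,b)$ bounded in terms of $r$. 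This gives $\mathrm{hull}(B(x,\epsilon))\subset B(x,C_r\epsilon)$, and these hulls form a convex neighbourhood basis. So local convexity follows once the rank bound is known, which means the rank must be established first.

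The rank bound is the main obstacle. Suppose $\phi:\{0,1\}^{r+1}\to\cone(X)$ is a median embedding, and let $\delta>0$ be the minimum distance between distinct vertex images. For each $i$, apply the approximation to the $2^{r+1}$ representatives $x^\varepsilon_i$, giving $\Pi_i$ of rank at most $r$. Let $p_\varepsilon=\pi_i(x^\varepsilon_i)$. The cube structure is detected by walls: for each coordinate $j$, the half-cubes $\{\varepsilon_j=0\}$ and $\{\varepsilon_j=1\}$. I would try to show that in $\Pi_i$ the convex hulls of $\{p_\varepsilon:\varepsilon_j=0\}$ and $\{p_\varepsilon:\varepsilon_j=1\}$ are disjoint for $\omega$-almost all $i$. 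Their $\lambda_i$-images stay within the rescaled $\delta/3$-neighbourhoods of the corresponding half-cubes in the cone, which are far apart, using the bounded-iteration hull estimate applied inside $\Pi_i$. A separating wall $W_j$ in $\Pi_i$ then exists for each $j$. For $j\ne j'$ the four quadrants are nonempty, so the walls $W_1,\dots,W_{r+1}$ are pairwise transverse. In a finite median algebra, $r+1$ pairwise transverse walls yield a median embedding of $\{0,1\}^{r+1}$, contradicting $\mathrm{rank}(\Pi_i)\le r$. The delicate point is controlling hulls in $\Pi_i$ with constants independent of $i$. I expect this to need the bounded-iteration hull lemma in rank $r$ together with the $h(n)$-approximation applied to the finitely many iterated medians involved.
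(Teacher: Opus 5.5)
The paper gives no proof of this statement; it is quoted from Bowditch, so your argument has to stand on its own. Several parts are sound: the construction of $\mu_\infty$, the verification of the median axioms via $5$-point approximations, the $k$-Lipschitz bound, and the deduction of local convexity from $d(c,a)\le k\,d(a,b)$ for $c\in I(a,b)$ together with the rank-bounded iterated-join description of hulls. The architecture of the rank step (one separating wall in $\Pi_i$ per coordinate, pairwise transverse, hence an $(r+1)$-cube in $\Pi_i$) is also viable. Below, $\sigma_i:\Pi_i\to X$ denotes the map you call $\lambda_i$, to avoid the clash with the scaling factors. I also set $n=2^{r+1}$, $F^j_0=\{\varepsilon:\varepsilon_j=0\}$ and $F^j_1=\{\varepsilon:\varepsilon_j=1\}$.

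The gap is exactly at the step you flag, and the justification you sketch does not deliver it. First, the $\Pi_i$-hull of a face need not stay near the face's vertices. For $X=\Z^2$ one may take $\Pi_i$ to be a grid box containing the four vertices of a square of side $\delta\lambda_i$. The hull of a face is then a whole edge, whose midpoint is $\delta\lambda_i/2$ from both endpoints. Second, the only metric control you have is $d(\sigma_ic,\sigma_ia)\le k\,d(\sigma_ia,\sigma_ib)+2h(n)+h(0)$ for $c\in I_{\Pi_i}(a,b)$. Iterated $m=m(r)$ times, this confines $\sigma_i(\mathrm{hull}_{\Pi_i}(\pi_iF^j_0))$ only to a neighbourhood of the face of radius about $(2k+1)^m\,\mathrm{diam}(\phi F^j_0)\,\lambda_i>\delta\lambda_i$. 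Take a coordinate $j$ along which two vertices at the minimal distance $\delta$ differ; that neighbourhood already contains a vertex of the opposite face. So metric estimates on the faces themselves cannot give disjointness of the two hulls. A median-theoretic input is missing.

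The fix is to collapse each face before estimating.
\begin{itemize}
\item Put $\rho_i(p)=\mu_{\Pi_i}(p_{\mathbf 0},p_{e_j},p)$. This is a median homomorphism (the gate map onto an interval).
\item We have $\mu_\infty(\phi(\mathbf 0),\phi(e_j),\phi(\varepsilon))=\phi(\varepsilon_je_j)$, and $\sigma_i$ is an $h(n)$-approximate homomorphism on all of $\Pi_i$. Hence for each $\eta>0$ and $\omega$-almost every $i$, the set $\sigma_i\rho_i(\pi_iF^j_0)$ lies within $\eta\lambda_i$ of $x_i^{\mathbf 0}$, and $\sigma_i\rho_i(\pi_iF^j_1)$ lies within $\eta\lambda_i$ of $x_i^{e_j}$. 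Meanwhile $d(x_i^{\mathbf 0},x_i^{e_j})\ge\delta\lambda_i/2$.
\item Your iterated-join estimate now works. The $\Pi_i$-hulls of these two clusters map into balls of radius $(2k+1)^m\eta\lambda_i+O(1)$ about points at distance at least $\delta\lambda_i/2$, so for small $\eta$ the hulls are disjoint.
\item Separate the two hulls by a halfspace $H$ of $\Pi_i$. Then $\rho_i^{-1}(H)$ and its complement are convex, so they give a wall $W_j$ separating $\pi_iF^j_0$ from $\pi_iF^j_1$. Your transversality and cube-embedding steps then go through.
\end{itemize}
Equivalently, you can argue by contradiction. A common point of the two hulls, together with its boundedly many interval witnesses (which are $\omega$-bounded by the same interval estimate), passes in the ultralimit to a point of $\mathrm{hull}(\phi F^j_0)\cap\mathrm{hull}(\phi F^j_1)$. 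That intersection is empty, because $x\mapsto\mu_\infty(\phi(\mathbf 0),\phi(e_j),x)$ is a median homomorphism that is constant on each face, with distinct values on the two faces.
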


Another nice property of coarse medians is that they behave well with relative hyperbolicity.
\begin{theorem}
Suppose group $G$ is hyperbolic relative to finitely generated subgroups $H_1,\ldots, H_m$. Then $G$ is coarse median of rank at most $r$ if and only if each $H_i$ is coarse median of rank at most $r$.\label{iff-relhyp}
\end{theorem}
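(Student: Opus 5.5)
The two directions are of quite different nature, and I would treat them separately.

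\textbf{If each $H_i$ is coarse median of rank at most $r$, then so is $G$.} This is the substantial direction. I would not reprove it from scratch. Instead I would invoke Bowditch's theorem on the invariance of coarse median spaces under relative hyperbolicity: a space hyperbolic relative to a collection of uniformly coarse median peripheral subspaces of rank at most $r$ is itself coarse median of rank at most $r$. The peripheral subspaces here are the cosets $gH_i$ in a Cayley graph of $G$. They are uniformly coarse median of rank at most $r$ because $H_i$ is finitely generated and undistorted in $G$, and left translation is an isometry. If I had to sketch the underlying argument, I would build the median on $G$ as follows. Given $x,y,z$, form the coarse median in the coned-off (hyperbolic) graph. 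If the triangle has a "fat" intersection with some peripheral coset $gH_i$, instead take the $H_i$-median of the closest-point projections of $x,y,z$ to $gH_i$. One then checks the finite approximation axiom by gluing a tree-like approximation with the finite median algebras coming from the peripherals. Rank does not increase, since the glued algebras have rank $\max(1,r)$.

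\textbf{If $G$ is coarse median of rank at most $r$, then so is each $H_i$.} Fix a coarse median $\mu$ on $G$ and a peripheral subgroup $H=H_i$. Let $\pi_H:G\to H$ be a coarse closest-point projection. In relatively hyperbolic groups $\pi_H$ is coarsely Lipschitz (Sisto's projection lemmas), and $H$ is undistorted. Define
$$\mu_H(x,y,z)=\pi_H(\mu(x,y,z)) \quad \text{for } x,y,z\in H.$$
The first axiom for $\mu_H$ follows by composing coarse Lipschitz maps and using that the inclusion $H\to G$ is a quasi-isometric embedding.

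For the approximation axiom, take $A\subset H$ with $|A|\le n$. Choose $(\Pi,\pi,\lambda)$ for $A$ in $G$, and replace $\lambda$ by $\pi_H\circ\lambda$. The rank of $\Pi$ is unchanged. The points of $A$ are still approximated, because $\pi_H$ moves points of $H$ only boundedly. It remains to compare $\pi_H\lambda\mu_\Pi(p,q,s)$ with $\pi_H\mu(\pi_H\lambda p,\pi_H\lambda q,\pi_H\lambda s)$. The first is within bounded distance of $\pi_H\mu(\lambda p,\lambda q,\lambda s)$, so it suffices to know that $\lambda p$ lies uniformly close to $H$. Equivalently, we need $\mu(a,b,c)$ to lie uniformly close to $H$ whenever $a,b,c\in H$: that is, $H$ is quasiconvex with respect to $\mu$. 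Indeed, every element of $\lambda(\Pi)$ is coarsely a iterated median of points $\lambda\pi a$ with $a\in A$, and those points are near $H$.

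\textbf{Main obstacle: median-quasiconvexity of $H$.} The step I expect to be hardest is showing that peripheral subgroups are quasiconvex with respect to an \emph{arbitrary} coarse median $\mu$ on $G$. I would first try to use that peripheral subgroups are strongly quasiconvex in $G$ (Tran). Then I would show that for $a,b\in H$, the coarse interval $\{\mu(a,b,c)\}$ lies in a bounded neighbourhood of quasigeodesics from $a$ to $b$. One route is via Bowditch's result that coarse intervals are coarsely connected by uniform quasigeodesic "median paths". Strong quasiconvexity then places these intervals near $H$. Iterating over the finitely many medians used to generate $\lambda(\Pi)$ gives a bound depending only on $n$, which is exactly what the function $h(n)$ permits.
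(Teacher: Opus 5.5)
Your proposal follows the paper's proof. The `if' direction is Bowditch's invariance theorem. For `only if', the paper likewise uses that peripheral subgroups are strongly quasiconvex (citing Drutu--Sapir rather than Tran), upgrades this to quasiconvexity with respect to $\mu$, and then restricts $\mu$ and composes with closest-point projection; you carry out that last verification in more detail than the paper does.

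The only difference is that the paper does not prove the step you flag as the main obstacle. It imports it from Fioravanti--Sisto (strongly quasiconvex subsets are quasiconvex for any coarse median). If you want to prove it yourself via median paths, the precise input you need is that each $m=\mu(a,b,c)$ lies near a uniform quasigeodesic from $a$ to $b$. You would obtain this by concatenating uniform quasigeodesics from $a$ to $m$ and from $m$ to $b$ that lie near the coarse intervals $I(a,m)$ and $I(m,b)$. The concatenation is a uniform quasigeodesic because $\mu(x,y,m)$ is coarsely $m$ for $x$ near $I(a,m)$ and $y$ near $I(m,b)$, which gives $d(x,m)+d(m,y)\le 2k\,d(x,y)+C$. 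The existence of those uniform quasigeodesics inside coarse intervals is the part you would still need to pin to a precise reference.
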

\begin{proof}
    The `if' direction is proved by Bowditch \cite{Bowditch2013RelHyp}. For the `only if' direction, suppose that $G$ admits a coarse median $\mu$ of rank $\leq r$. Note that each peripheral subgroup $H_i$ is strongly quasiconvex in $G$ by \cite[Lemma 4.15]{Drutu2004TreegradedSA}. 
    It follows from \cite[Corollary 3.4]{FioravantiSisto2025} that each $H_i$ is quasiconvex with respect to $\mu$. Hence we obtain rank $r$ coarse medians on the peripherals by restricting and taking closest point projections onto each subgroup.
\end{proof}

\section{Main results and proofs}

\begin{proposition}
Let $N$ be a finitely generated group virtually nilpotent which is not virtually abelian. Then no asymptotic cone of $N$ admits a locally convex Lipschitz median of finite rank. \label{nilpotent-median}
\end{proposition}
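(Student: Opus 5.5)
By Pansu's theorem, every asymptotic cone of $N$ is bi-Lipschitz equivalent (indeed isometric) to a simply connected graded nilpotent Lie group $G_\infty$ with a Carnot--Carathéodory metric. This Carnot group is the graded group associated to the Malcev completion of a finite-index torsion-free nilpotent subgroup. Since $N$ is not virtually abelian, $G_\infty$ is a non-abelian Carnot group of step at least $2$. Its topological dimension is the Hirsch length $n = \sum_i \dim V_i$, while its Hausdorff dimension is $Q = \sum_i i \dim V_i > n$. By the Remark on bi-Lipschitz invariance, it suffices to show that $G_\infty$ with its CC metric admits no locally convex Lipschitz median of finite rank.

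I argue by contradiction. Suppose $\mu$ is such a median of rank $r$. The space $G_\infty$ is homeomorphic to $\R^n$, connected and locally compact, so \cref{ind-equal-sep} gives $\sepdim(G_\infty) = n$. Then \cref{sepdim-equals-rank} forces $r = n$. Now apply \cref{haettel-cube-embedding} to obtain a bi-Lipschitz embedding $f:[0,1]^n \to G_\infty$, with respect to some metric on the cube. For the median cube, the natural metric to use is the $L^1$ metric, and Haettel's embedding is bi-Lipschitz with respect to it. Thus $G_\infty$ contains a bi-Lipschitz copy of a Euclidean $n$-cube.

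Next I compare dimensions. The image $f([0,1]^n)$ is a compact subset of $G_\infty$ with nonempty interior: by invariance of domain, an injective continuous map from an open subset of $\R^n$ into an $n$-manifold is open. So the image contains a CC-ball, which has positive $Q$-dimensional Hausdorff (Haar) measure, and hence Hausdorff dimension $Q$. On the other hand, a bi-Lipschitz image of $[0,1]^n$ has Hausdorff dimension $n$. Since $Q > n$ for a step $\geq 2$ group, this is a contradiction.

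The main obstacle is justifying the two inputs correctly. First, one must ensure the cube embedding from \cref{haettel-cube-embedding} is bi-Lipschitz for a Euclidean-type metric and not some exotic one. Any metric on $[0,1]^n$ for which the median cube is bi-Lipschitz to its standard $L^1$ structure suffices, and I would check this against Haettel's construction. Second, one must correctly quote Pansu's theorem, in particular that passing to the finite-index torsion-free nilpotent subgroup does not change the cone. As a fallback if the Hausdorff dimension argument is awkward, one can instead use that there is no Lipschitz, let alone bi-Lipschitz, embedding of an open subset of $\R^n$ with nonempty interior into a non-abelian Carnot group. Pansu differentiability gives a homogeneous homomorphism from $\R^n$ whose image is abelian and hence lies in a proper subgroup, which contradicts bi-Lipschitz injectivity at a point of differentiability.
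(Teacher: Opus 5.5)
Your proof is correct and follows essentially the same route as the paper. Pansu's theorem, with Chow--Rashevskii to identify the metric topology with the manifold topology, gives separation dimension $n$ and hence forces the rank to be $n$; Haettel's bi-Lipschitz cube then has open image by invariance of domain; and this contradicts Mitchell's result that the Hausdorff dimension $Q$ exceeds $n$. The only difference is cosmetic: you compare dimensions locally, via a CC-ball inside the image of the cube, whereas the paper covers the whole group by countably many left translates of that open image.
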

\begin{proof}
    It is known that $N'=\cone(N)$ is a non-abelian nilpotent Lie group with a metric bi-Lipschitz equivalent to the Carnot-Carath\`{e}odory metric \cite{Pansu1983CroissanceDB}. Assume for contradiction that $N'$ admits a locally convex Lipschitz median of finite rank. Let $r$ be the dimension of $N'$ as a manifold. By the Chow-Rashevskii theorem, $N'$ with the topology induced from its metric is homeomorphic to its intrinsic, locally Eulidean topology \cite{Chow1939}. Since $N'$ is also locally compact, it follows from \cref{ind-equal-sep} that $N'$ has separation dimension $r$. Hence the median on $N'$ has rank $r$ by \cref{sepdim-equals-rank}.
    
    By \cref{haettel-cube-embedding}, there is a bi-Lipschitz embedding $\phi:(0,1)^r \to N'$. By invariance of domain, the image of $\phi$ is an open subset. Since $N'$ is covered by countably many left translates of this open subset, its Hausdorff dimension must equal $r$. This is a contradiction since $r$-dimensional non-abelian nilpotent Lie groups with the Carnot-Carath\`{e}odory metric have Hausdorff dimension greater than $r$ \cite{Mitchell1985}, and Hausdorff dimension is invariant under bi-Lipschitz equivalence.
\end{proof}

\begin{corollary}
    Let $G$ be a finitely generated virtually nilpotent group. Then $G$ is coarse median if and only if it is virtually abelian. \label{main}
\end{corollary}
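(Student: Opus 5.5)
The corollary has two directions, and both reduce almost immediately to results already in hand.

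For the ``if'' direction, suppose $G$ is virtually abelian. Then $G$ contains a finite-index subgroup isomorphic to $\Z^n$, so $G$ is quasi-isometric to $\Z^n$, which in turn is quasi-isometric to $(\R^n, L^1)$. On $\Z^n$ with the word metric for the standard generators (the $L^1$ metric), I take the coordinatewise statistics median. This is an honest median of rank $n$: any finite subset lies in a finite product of finite subsets of $\Z$, each of which is a median subalgebra. So it is a coarse median of rank $n$ with $h$ identically zero, and it is $1$-Lipschitz in each variable. Finally, I need that the coarse median property of finite rank is a quasi-isometry invariant. This is standard (Bowditch): transport $\mu$ along a quasi-isometry $f$ and a quasi-inverse $\bar f$ via $\mu'(x,y,z)=f\mu(\bar f x,\bar f y,\bar f z)$. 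The axioms then hold with the constants changed, and the rank is unchanged because the same finite median algebras $\Pi$ are used. The paper already implicitly uses this when it says ``any, and hence all, of its Cayley graphs''. Hence $G$ is coarse median.

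For the ``only if'' direction, suppose $G$ is virtually nilpotent, coarse median of some finite rank $r$, but not virtually abelian. By \cref{cone-admits-lipschitz-median}, any asymptotic cone $\cone(G)$ (choose any ultrafilter and scaling sequence) admits a locally convex Lipschitz median of rank at most $r$. This directly contradicts \cref{nilpotent-median}, which says that no asymptotic cone of such a $G$ admits a locally convex Lipschitz median of finite rank. So $G$ must be virtually abelian.

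No step here is a real obstacle, since the substance lives in \cref{nilpotent-median}. The only point needing care is the quasi-isometry invariance used in the first direction: one must check that the transported map satisfies the approximation axiom, with $\lambda$ replaced by $f\circ\lambda$ and $\pi$ by $\pi\circ\bar f$, and with $h$ enlarged by the quasi-isometry constants. This is routine.
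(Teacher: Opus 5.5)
Your proposal is correct and is essentially the paper's proof. In the nontrivial direction, coarse median plus virtually nilpotent gives a locally convex Lipschitz median of finite rank on an asymptotic cone via \cref{cone-admits-lipschitz-median}, contradicting \cref{nilpotent-median}; the converse rests on $\Z^n$ being coarse median, and your explicit check of quasi-isometry invariance fills in a step the paper leaves implicit.
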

\begin{proof}
    If $G$ is virtually nilpotent and coarse median, its asymptotic cone admits a locally convex Lipschitz median of finite rank by \cref{cone-admits-lipschitz-median}. Hence by \cref{nilpotent-median}, $G$ must be virtually abelian. The converse follows from $\mathbb{Z}^r$ being a coarse median space.
\end{proof}

\begin{corollary}
Let $G$ be a relatively hyperbolic group with virtually nilpotent peripherals. Then $G$ is coarse median if and only if all peripherals are virtually abelian.\label{cm-iff}
\end{corollary}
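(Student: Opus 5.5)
The plan is to combine \cref{iff-relhyp} with \cref{main}. Let $G$ be hyperbolic relative to the finitely generated virtually nilpotent subgroups $H_1,\ldots,H_m$. Peripheral subgroups of a finitely generated relatively hyperbolic group are finitely generated, so the $H_i$ are finitely generated and \cref{iff-relhyp} applies. Throughout, ``coarse median'' means coarse median of some finite rank, as in the convention of the paper.

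Suppose first that $G$ is coarse median of rank at most $r$. By the `only if' direction of \cref{iff-relhyp}, each $H_i$ is coarse median of rank at most $r$. Each $H_i$ is a finitely generated virtually nilpotent group, so \cref{main} shows that each $H_i$ is virtually abelian.

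Conversely, suppose every $H_i$ is virtually abelian. Then $H_i$ contains a finite-index subgroup isomorphic to $\Z^{r_i}$, so $H_i$ is quasi-isometric to $\Z^{r_i}$. Hence $H_i$ is coarse median of rank at most $r_i$, since $\Z^{r_i}$ carries the coarse median obtained from the componentwise statistics median. Here I use that coarse median structures of a given rank transfer along quasi-isometries, which is implicit in the paper's convention that a group is coarse median when any one of its Cayley graphs is. Setting $r=\max_i r_i$, every $H_i$ is coarse median of rank at most $r$. The `if' direction of \cref{iff-relhyp} then gives that $G$ is coarse median of rank at most $r$.

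The main point requiring care is the rank bookkeeping. \cref{iff-relhyp} is stated for a common bound $r$, so in the converse direction I must take the maximum of the ranks; this is possible because there are only finitely many peripherals. In the forward direction, it suffices that $G$ has some finite rank, and that rank is inherited by every $H_i$. The edge case $m=0$ (i.e.\ $G$ hyperbolic) fits the same argument: the condition on the peripherals is then vacuous, and hyperbolic groups are coarse median of rank $1$. Beyond these points the statement is a direct formal consequence of \cref{iff-relhyp} and \cref{main}, so I do not expect any step to be a real obstacle.
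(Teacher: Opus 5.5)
Your proposal is correct and matches the paper's proof, which simply combines \cref{iff-relhyp} with \cref{main}. Your extra bookkeeping (finite generation of the peripherals, taking the maximum rank over the finitely many $H_i$, quasi-isometry invariance, and the case $m=0$) makes explicit what the paper leaves implicit.
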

\begin{proof}
This follows from \cref{iff-relhyp} and \cref{main}.
\end{proof}

In \cite{Haettel2016}, Haettel proved that thick affine buildings of spherical type other than $A_1^r$ cannot admit a locally convex Lipschitz median as there is `too much branching' of flats. This observation was generalized to the concept of \textit{richly branching flats} in \cite{MunroPetyt2025}. The analysis of \cref{nilpotent-median} shows that the sub-Riemannian geometry of nilpotent Lie groups provides another obstruction to the existence of such medians.

By passing to the asymptotic cone, Haettel consequently showed that all lattices in higher rank simple algebraic groups over arbitrary local fields are not coarse median. This is in contrast to co-compact rank 1 lattices and non-compact lattices in $SO_0(n,1)$ which are coarse median. We analogously show that non-compact lattices in any other isometry group of rank 1 symmetric space cannot be coarse median. This follows naturally by viewing such lattices as relatively hyperbolic groups with virtually nilpotent peripherals.

\begin{corollary}
Let $\Gamma$ be a non-compact lattice in the isometry group of a rank 1 symmetric space of non-compact type other than real hyperbolic space. Then $\Gamma$ is not coarse median. \label{cor-mfds}
\end{corollary}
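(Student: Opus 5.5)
The plan is to reduce the statement to \cref{cm-iff}. The rank 1 symmetric spaces of non-compact type are the hyperbolic spaces $\mathbb{H}^n_{\mathbb{K}}$ over $\mathbb{K}\in\{\R,\C,\mathbb{H},\mathbb{O}\}$ (the octonionic case only in dimension $2$). Excluding the real case, we are left with the complex and quaternionic hyperbolic spaces and the Cayley plane, all of real dimension at least $4$.

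First I will recall the structure of a non-compact lattice $\Gamma$ in $G=\mathrm{Isom}(X)$, with $X$ one of these spaces. By the work of Garland--Raghunathan (see also Bowditch and Farb), $\Gamma$ is hyperbolic relative to its maximal parabolic subgroups, namely the stabilizers of the cusps. Non-compactness of $\Gamma$ guarantees that there is at least one such peripheral subgroup $P$. Each $P$ acts cocompactly on a horosphere, and it is virtually a lattice in $N\rtimes K_0$, where $N$ is the unipotent radical of the corresponding minimal parabolic subgroup of $G$ and $K_0$ is compact. By the Bieberbach--Auslander theorem, $P$ is therefore finitely generated and virtually a lattice in $N$, so in particular virtually nilpotent. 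So \cref{cm-iff} applies, and $\Gamma$ is coarse median if and only if every such $P$ is virtually abelian.

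The remaining step, which I regard as the main point, is to show that no peripheral subgroup is virtually abelian. For $\mathbb{K}\neq\R$, the group $N$ is a generalized Heisenberg group: it is $2$-step nilpotent with center $\mathrm{Im}(\mathbb{K})$, which has dimension $1$, $3$ or $7$. In particular $N$ is non-abelian. A lattice $\Lambda$ in a simply connected nilpotent Lie group $N$ is Zariski dense (Malcev), and its Malcev completion is $N$ itself. Hence if a finite-index subgroup of $\Lambda$ were abelian, that subgroup would still be a lattice in $N$, and $N$ would be abelian, which is a contradiction.

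One can also argue via growth. $P$ is quasi-isometric to $N$, and $N$ has polynomial growth of degree $\sum_i i\dim(\mathfrak{n}_i/\mathfrak{n}_{i+1}) = \dim N + \dim Z(N) > \dim N$ by Bass--Guivarc'h. A virtually abelian group quasi-isometric to $N$ would instead be virtually $\Z^{\dim N}$, with growth degree $\dim N$. Either way, $P$ is virtually nilpotent but not virtually abelian, so $\Gamma$ is not coarse median by \cref{cm-iff}.
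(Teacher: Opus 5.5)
Your proposal is correct and follows the paper's proof: $\Gamma$ is hyperbolic relative to its cusp stabilizers, these are virtually lattices in a non-abelian generalized Heisenberg group (hence virtually nilpotent but not virtually abelian), and \cref{cm-iff} finishes the argument. You supply details the paper delegates to Farb's work, namely Auslander--Bieberbach and Malcev rigidity. In the optional growth argument, the claim that a virtually abelian $P$ would be virtually $\Z^{\dim N}$ needs a one-line justification (e.g.\ Hirsch length or cohomological dimension), though your Malcev argument already suffices on its own.
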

\begin{proof}
The lattice $\Gamma$ is a relatively hyperbolic group whose peripherals are modelled on generalized Heisenberg groups, which are virtually nilpotent but not virtually abelian \cite{Farb1998}. The claim follows from \cref{cm-iff}.
\end{proof}
\begin{example}
Recall that complex hyperbolic $n$-space has isometry group $PU(n,1)\rtimes \Z/2$. Explicit examples of $\Gamma$ as above are thus given by
$$PU(n,1;\mathcal{O}_K) = P(SU(n,1)\cap SL(n+1,\mathcal O_K))$$
for $n\geq 2$, where $K$ is an imaginary quadratic field and $\mathcal O_K$ is its ring of integers \cite{Morris2015}. For $n=2$, these are the classical \textit{Picard modular groups}.
\end{example}

The results above lead to the classification of the coarse median property for lattices in simple algebraic groups. All that remains to be considered are the rank 1 lattices over base fields other than $\R$: For $k = \C$ the only possible ambient groups are $SL(2,\C)$ and $PSL(2,\C)$, both of which are locally isomorphic to $SO_0(3,1)$. For non-Archimedean $k$, there are no finitely generated non-compact lattices \cite{Lubotzky1991}.
\begin{theorem}
    Let $k$ be a local field and $\Gamma$ be a lattice in the group $G$ of $k$-points of a simple algebraic group without compact factors.
    \begin{itemize}
        \item If $G$ is rank 1 and $\Gamma$ is co-compact, then $\Gamma$ is coarse median.
        \item If $G$ is locally isomorphic to $SO_{0}(n,1)$ and $\Gamma$ is non-compact, then $\Gamma$ is coarse median.
        \item If $G$ is rank 1 but not locally isomorphic to $SO_0(n,1)$ and $\Gamma$ is non-compact, then $\Gamma$ is not coarse median.
        \item If $G$ is higher rank, then $\Gamma$ is not coarse median.
    \end{itemize}
\end{theorem}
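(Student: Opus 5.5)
The plan is to treat the four bullets one at a time. Each case reduces either to an earlier result in the paper or to a standard structural fact about lattices.

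\emph{Co-compact rank 1 lattices.} Let $G$ have rank 1 over the local field $k$. A co-compact lattice $\Gamma$ acts properly and cocompactly on the associated rank 1 space. Over $\R$ (or $\C$) this is a negatively curved symmetric space; in the non-Archimedean case it is a Bruhat--Tits tree. By the \v{S}varc--Milnor lemma, $\Gamma$ is then Gromov hyperbolic. Hyperbolic groups are coarse median of rank 1 \cite{Bowditch2013CoarseMS}, which settles this bullet.

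\emph{Non-compact lattices in $SO_0(n,1)$.} Such a $\Gamma$ is hyperbolic relative to its cusp subgroups \cite{Farb1998}. These are lattices in the parabolic subgroups and are virtually $\Z^{n-1}$. Since $\Z^{n-1}$ is coarse median of finite rank, the `if' direction of \cref{iff-relhyp} (equivalently \cref{cm-iff}) shows that $\Gamma$ is coarse median. The discussion before the theorem also covers $k=\C$: there $G$ is locally isomorphic to $SO_0(3,1)$, and passing to finite index and quotienting by finite centers preserves being coarse median.

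\emph{Non-compact rank 1 lattices not locally isomorphic to $SO_0(n,1)$.} First we need $k=\R$, using the discussion before the theorem. For non-Archimedean $k$ there are no finitely generated non-compact lattices \cite{Lubotzky1991}. For $k=\C$, rank 1 forces local isomorphism with $SO_0(3,1)$. So $G$ is a real rank 1 simple group other than $SO_0(n,1)$: it is locally isomorphic to $SU(n,1)$, $Sp(n,1)$ or $F_4^{-20}$. In that case $\Gamma$ is, up to finite kernel and finite index, a non-compact lattice in the isometry group of a non-real hyperbolic rank 1 symmetric space. \cref{cor-mfds} then applies. Being coarse median is a quasi-isometry invariant, so passing through the local isomorphism (finite center, finite index) is harmless.

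\emph{Higher rank.} This case is Haettel's theorem \cite{Haettel2016}. Every asymptotic cone of $\Gamma$ is bi-Lipschitz to a thick affine building that is not of type $A_1^r$; for non-uniform lattices one uses the fact that the lattice is quasi-isometric to the group or its symmetric space/building. Such a building admits no locally convex Lipschitz median of finite rank. By \cref{cone-admits-lipschitz-median}, $\Gamma$ is therefore not coarse median.

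The main obstacle is bookkeeping rather than new mathematics. Specifically, I expect two points to need care. The first is justifying that the notions are invariant under passing between $G$, its adjoint form and local isomorphs. The second is that non-uniform higher rank lattices are quasi-isometrically embedded (Lubotzky--Mozes--Raghunathan), which is needed so that Haettel's cone argument applies. For that second point I would simply cite Haettel's theorem as stated for all lattices.
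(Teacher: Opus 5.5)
Your proposal is correct and follows the paper's own assembly of this theorem: hyperbolicity for cocompact rank 1 lattices, Bowditch's relatively hyperbolic result with virtually $\Z^{n-1}$ cusps for $SO_0(n,1)$, the base-field reduction plus \cref{cor-mfds} for the third bullet, and Haettel's theorem for higher rank, which you rightly say you would cite for all lattices. Do, however, drop the remark that a non-uniform lattice is quasi-isometric to $G$ or to its symmetric space/building. It is not: it is only undistorted (Lubotzky--Mozes--Raghunathan), and its asymptotic cone embeds bi-Lipschitz into the building only as a proper closed subset, since cusp regions lie at linear distance from the orbit. Such an embedding alone does not rule out a locally convex Lipschitz median, so the non-uniform higher rank case genuinely rests on Haettel's additional argument rather than on the sketch you gave.
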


We conclude with a geometric formulation and extension of \cref{cor-mfds}. It shows that for a finite-volume non-compact locally symmetric manifold $M$ of negative curvature which is not locally isometric to real hyperbolic space, $\pi_1(M)$ is not coarse median. The same conclusion, with the same proof, applies more generally to a complete finite-volume non-compact Riemannian manifold $M$ of pinched negative sectional curvature with at least one cusp cross-section $Q$ that does not admit a flat metric. Note that since $Q$ is a closed infranilmanifold \cite{BGS1985}, $\pi_1(Q)$ is virtually abelian if and only if $Q$ admits a flat metric. Many non-locally symmetric examples of such $M$ are provided by Riemannian hyperbolization \cite{Ontaneda2020}.

\subsection*{Acknowledgements}
I would like to thank my advisor Alex Wright for generously sharing his time and insights with me. I am also grateful to Alessandro Sisto and Ralf Spatzier for the helpful comments and discussions.

\newpage

\printbibliography
\end{document}